\newtheorem{thm}{Theorem}[section]
\newtheorem{lem}[thm]{Lemma}
\newtheorem{prop}[thm]{Proposition}
\theoremstyle{mydefinition}
\theoremstyle{myremark}
\def\CT{\mathop{\mathrm{CT}}}
\title{Meeting a Challenge raised by Ekhad and Zeilberger related to Stern's Triangle}
\author{Jinlong Tang$^{1}$ and Guoce Xin$^{2,}$
\\[2mm]
{\small $^{1, 2}$ School of Mathematical Sciences,}\\[-0.8ex]
{\small Capital Normal University, Beijing, 100048, P.R.~China}
}
\date{June 10, 2025}
\begin{document}

\maketitle

\begin{abstract}
This paper resolves an open problem raised by Ekhad and Zeilberger for computing $\omega(10000)$, which is related to Stern's triangle. While $\nu(n)$, defined as the sum of squared coefficients in $\prod_{i=0}^{n-1} (1 + x^{2^i} + x^{2^{i+1}})$, admits a rational generating function, the analogous function $\omega(n)$ for $\prod_{i=0}^{n-1} (1 + x^{2^i+1} + x^{2^{i+1}+1})$ presents substantial computational difficulties due to its complex structure.

We develop a method integrating constant term techniques, conditional transfer matrices, algebraic generating functions, and $P$-recursions.
Using the conditional transfer matrix method, we represent $\omega(n)$ as the constant term of a bivariate rational function. This framework enables the calculation of $\omega(10000)$, a $6591$-digit number, and illustrates the method's broad applicability to combinatorial generating functions.
\end{abstract}

\noindent
\begin{small}
 \emph{Mathematic subject classification}: Primary 05A15; Secondary 05A10.
\end{small}

\noindent
\begin{small}
\emph{Keywords}: generating function; P-recursion; constant term; conditional transfer matrix.
\end{small}

\section{Introduction}
In \cite{RP.Stanley}, Stanley defined \emph{Stern's triangle} as an array of numbers analogous to Pascal's triangle. The generating function for the $n$th row (starting with $n=0$) is given by
\[
F_n(x) := \sum_{k \geq 0} a(n,k) x^k = \prod_{i=0}^{n-1} (1 + x^{2^i} + x^{2^{i+1}}).
\]
This array is named \emph{Stern's triangle} because, in the limit as $n \to \infty$,
\[
F(x) = \sum_{k \geq 0} a_k x^k = \prod_{i=0}^{\infty} (1 + x^{2^i} + x^{2^{i+1}})
\]
yields the generating function for the celebrated Stern diatomic sequence. This sequence exhibits the remarkable property (not irrelevant here)  that every positive rational number appears exactly once as a ratio $a_k / a_{k+1}$, and $\gcd(a_k, a_{k+1}) = 1$.

Stanley initiated the study of $\nu(n) = \sum_k a(n,k)^2$ and established that $\nu(n)$ possesses a rational generating function:
\[
\sum_{n \geq 0} \nu(n) x^n = \frac{1 - 2x}{1 - 5x + 2x^2}.
\]
His proof relies on the recursion $F_n(x) = (1 + x + x^2) F_{n-1}(x^2)$, and this approach extends to sums of the form
\begin{align}
u_\alpha(n) := \sum_k a(n,k)^{\alpha_0} a(n,k+1)^{\alpha_1} \cdots a(n,k+m-1)^{\alpha_{m-1}},\label{eq:u-alpha}
\end{align}
where $\alpha \in \mathbb{N}^{m}$. Note that when $\alpha = (2)$, $u_{\alpha}(n)$ reduces to $\nu(n)$. This method also generalizes to other Stern-like sequences.

Subsequently, Ekhad and Zeilberger \cite{stern} provided an alternative proof using the recursion
\[
F_n(x) = F_{n-1}(x) (1 + x^{2^{n-1}} + x^{2^n}).
\]
They further developed a Maple procedure to automate the computation of $u_\alpha(n)$. In the same paper, they proposed the following prized problem:

\medskip\noindent
\textbf{The Prized Problem:}
Compute $\omega(10000)$, where
\[
G_n(x) = \sum_{k \geq 0} b(n,k) x^k := \prod_{i=0}^{n-1} (1 + x^{2^i + 1} + x^{2^{i+1} + 1}), \quad \omega(n) := \sum_{k \geq 0} b(n,k)^2.
\]

\medskip\noindent
This problem illustrates how a slight modification (a proverbial ``$\varepsilon$") can transform a tractable problem into an intractable one. Although $G_n(x)$ resembles $F_n(x)$, computing $\omega(10000)$ proves substantially harder than $\nu(10000)$. The unconventional form of $G_n(x)$ obscures any recognizable patterns, rendering standard generating function techniques ineffective.

Using the definition, Maple computes the first $21$ values of $\omega(n)$ (starting at $n=0$):
\[
1,  3,  13,  55,  249,  1121,  5025,  22607,  101931,  460877,  2088687,  9482763,  43109307,  196163983,
\]
\[
893222041,  4069162197,  18543631161,  84525140297,  385343891847,  1756959373157,  8011450183181.
\]
Computing further values becomes prohibitive due to exponential growth in both polynomial degree and coefficient size.

In this paper, we solve this problem by computing $\omega(10000)$ via constant term manipulation. Our approach also uses the conditional transfer matrix method, algebraic generating functions, and P-recursions.

The remainder of the paper is organized as follows. Section 2 presents a constant term approach to $\nu(n)$ starting with the observation $\nu(n) = \CT_x F_n(x) F_n(x^{-1})$. The proof is similar to that of Ekhad and Zeilberger. We then analyze the obstacles in adapting this approach to $\omega(n)$. Section 3 expresses $\omega(n)$ as the constant term of a three-variable iterated Laurent series with auxiliary variables $u$ and $v$ (later substituted by $x$). This enables the application of the transfer matrix method under specific constraints, yielding $\omega(10000)$ as the constant term of a complex rational function. Section 4 evaluates this constant term using algebraic generating functions and P-recursions.

\def\ldeg{\textrm{ldeg}}
\section{Constant term approach to $\nu(n)$}\label{2}
In this section, we recompute $\nu(n)$ using the constant term operator $\CT_x$ and
explain the difficulty for computing $w(n)$.

A Laurent polynomial in $x$ is of the form $L(x)=\sum_{i=M}^{N} a_i x^i$ where
the coefficients $a_i$ belong to a ring or a field. The degree
$\deg L(x)$ is $N$ if $a_N\neq 0$ and the least degree $\ldeg L(x)$ is $M$ if $a_M\neq 0$. The constant term (linear) operator $\CT_x$ acts by
$$ \CT_x \sum_{i=M}^{N} a_i x^i =a_0.$$

The operator $\CT_x$ naturally extend for Laurent series (allowing $N\to \infty$), or even for formal Laurent series (allowing $M\to -\infty$ and $N\to \infty$).

Our approach to $\nu(n)$ is based on the
observation
$$\sum_{i=M}^{N} a_i^2 = \CT_x L(x) L(x^{-1}),$$
and the following two basic properties
\begin{enumerate}
  \item[P1] $\CT_x L(x) = \CT_{x} L(x^{-1})$ for any Laurent polynomial $L(x)$.
  \item[P2] $\CT_x L(x) = 0$ whenever $\deg L(x) < 0$ or $\ldeg L(x) > 0$.
\end{enumerate}

We derive the recurrence relation for $\nu(n)$ through the following equalities:
\begin{align}
   \nu(n+1) & = \CT_{x} F_{n+1}(x)F_{n+1}\left(\frac{1}{x}\right) \notag \\
   & = \CT_{x} F_{n}(x)F_{n}\left(\frac{1}{x}\right)\left(x^{-2^{n+1}} + 2x^{-2^{n}} + 3 + 2x^{2^{n}} + x^{2^{n+1}}\right) \notag\\
   & = \CT_{x} F_{n}(x)F_{n}\left(\frac{1}{x}\right)\left(3 + 4x^{2^{n}} + 2x^{2^{n+1}}\right) \notag\\
   & = \CT_{x} F_{n}(x)F_{n}\left(\frac{1}{x}\right)\left(3 + 4x^{2^{n}}\right).\label{res}
\end{align}
The elimination of the $x^{2^{n+1}}$ term in the last step follows from P2, since
$\deg F_n(x)=2^{n+1}-2$ implies that $\ldeg \left[ F_{n}(x)F_{n}(x^{-1})x^{2^{n+1}} \right] = 2 > 0$.

Define the auxiliary quantity $\nu^{(1)}(n) := \CT_{x} F_{n}(x)F_{n}\left(\frac{1}{x}\right)x^{2^{n}}$. From equation (\ref{res}), we directly obtain the primary recurrence:
\[
\nu(n+1) = 3\nu(n) + 4\nu^{(1)}(n).
\]
An analogous argument applied to $\nu^{(1)}(n+1)$ yields the secondary recurrence:
\[
\nu^{(1)}(n+1) = \nu(n) + 2\nu^{(1)}(n).
\]

Let $\Gamma(n) := \begin{pmatrix} \nu(n) \\ \nu^{(1)}(n) \end{pmatrix}$ and define the transition matrix $M := \begin{pmatrix} 3 & 4 \\ 1 & 2 \end{pmatrix}$. We obtain the matrix recurrence relation
\[
\Gamma(n+1) = M\Gamma(n).
\]
The initial conditions $\nu(1) = 3$ and $\nu^{(1)}(1) = 1$ extend naturally to $n=0$ by defining $\nu(0) = 1$ and $\nu^{(1)}(0) = 0$. Consequently, the closed-form expression becomes
\[
\Gamma(n) = M^n\Gamma(0).
\]

To compute $M^n$, we consider the matrix generating series:
\begin{align}
\sum_{n \geq 0} M^n t^n &= (I - tM)^{-1} \notag \\
&= \det(I - tM)^{-1} (I - tM)^*,
\end{align}
where $(I - tM)^*$ denotes the adjoint matrix, whose entries are polynomials in $t$.

Through this framework, we derive the generating function for $\Gamma(n)$:
\[
\sum_{n \geq 0} \Gamma(n)x^n = (I - Mx)^{-1}\Gamma(0) = \begin{pmatrix} \dfrac{1 - 2x}{1 - 5x + 2x^2} \\ \dfrac{x}{1 - 5x + 2x^2} \end{pmatrix}.
\]
Extracting the first component yields the generating function for our main sequence:
\[
\sum_{n \geq 0} \nu(n)x^n = \frac{1 - 2x}{1 - 5x + 2x^2}.
\]
This generating function, obtained through the constant term method, coincides with the result established by Stanley.

In the computation of the state $\nu(n)$, we introduce an auxiliary state $\nu^{(1)}(n)$ and establish a system of recursive relations involving these two states. By employing the method of transformation matrices, we can systematically solve for $\nu(n)$ in closed form. This approach is versatile and can be extended to a broader class of problems, provided that the number of states involved remains finite.

\medskip
The computation becomes more complex when we replace $F_n(x)$ with $G_n(x)$. The degree property now reads
\[
\deg G_n(x) = 2^{n+1} - 2 + n.
\]
Attempting to proceed as before leads to infinitely many states of the form $\CT_x G_n(x)G_n(x^{-1}) x^{2^n+k}$ for integers $k$ depending on $n$. This prevents the previous approach from computing $\omega(10000)$ effectively.

Our key innovation is the introduction of an auxiliary variable $u$, which will ultimately be substituted by $x$. We demonstrate that under certain conditions, $u$ can be treated as a constant, enabling the application of the transformation matrix method. Similarly, we express $\omega(10000)$ as the constant term of a rational function. However, computing $\omega(10000)$ remains challenging. We therefore divide the computation across the next several sections.

\section{Conditional Transformation Matrix Method}
In this section, we establish Theorem \ref{Recurrence-Matrix} using the conditional transformation matrix method. This allows us to express $w(n)$ as the constant term of a rational function.

Define the auxiliary function
\[
K_{n}(u,x):=\prod_{i=0}^{n-1}\left(1 + u x^{2^{i}} + u x^{2^{i+1}}\right).
\]
Observe that $G_n(x) = K_n(x,x)$.

Let us derive an alternative expression of $\omega(n)$. This is better work in
the field of iterated Laurent series $\mathbb{R}((x_1))((x_2))((x_3))$, where $x_1=u,x_2=x, x_3=v$ in our case. This field includes the field of rational functions as a subfield, thus any rational function has a unique
formal Laurent series expansion. Consequently, the constant term operators
$\CT_{x_i}$ commutes with each other. See \cite{Xin-CT} for details.

Here we only need to keep in mind the following two series expansions and Lemma \ref{P-x,u}:
$$\frac{1}{1-x/u}= \sum_{i\geq0}(x/u)^{i}, \text{ and }  \frac{1}{1-v/x} = \sum_{i\geq0}(v/x)^{i}. $$

\begin{lem}\label{P-x,u}
If $P(u)=\sum_{i\geq0}a_{i}u^{i}$, where $a_{i}$ is free of $u$ for all $i\geq0$, then
$$\CT\limits_{u}P(u)\cdot\frac{1}{1-x/u}=P(x).$$
Similarly, if $N(u)=\sum_{i\leq0}b_{i}v^{i}$, where $b_{i}$ is free of $v$ for all $i\leq0$, then
$$\CT\limits_{v}N(v)\cdot\frac{1}{1-v/x}=N(x).$$
\end{lem}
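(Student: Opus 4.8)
The plan is to prove Lemma \ref{P-x,u} directly by expanding the relevant geometric series in the field of iterated Laurent series and extracting the constant term termwise. The key point is that the constant term operator $\CT_u$ is linear and that in the ring of formal Laurent series the products below converge coefficientwise, so all manipulations are legitimate.

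First I would treat the first claim. Write $P(u) = \sum_{i \geq 0} a_i u^i$ with each $a_i$ free of $u$, and expand $\frac{1}{1 - x/u} = \sum_{j \geq 0} x^j u^{-j}$ (this is the prescribed expansion, valid because we work in iterated Laurent series with $u$ the outermost variable, so negative powers of $u$ are allowed). Multiplying,
\[
P(u)\cdot\frac{1}{1-x/u} = \sum_{i \geq 0}\sum_{j \geq 0} a_i x^j u^{i-j}.
\]
Now apply $\CT_u$: only the terms with $i = j$ survive, so $\CT_u\bigl[P(u)\cdot\frac{1}{1-x/u}\bigr] = \sum_{i \geq 0} a_i x^i = P(x)$. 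One should note that for each fixed power $u^{k}$ only finitely many pairs $(i,j)$ with $i - j = k$ and $i,j \geq 0$ contribute to a given $x$-degree, so the coefficient extraction is well defined; since $a_i$ is free of $u$ this is immediate.

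Second, the analogous claim for $N(v) = \sum_{i \leq 0} b_i v^i$: expand $\frac{1}{1-v/x} = \sum_{j \geq 0} v^j x^{-j}$ and multiply to get $\sum_{i \leq 0}\sum_{j \geq 0} b_i x^{-j} v^{i+j}$; applying $\CT_v$ keeps the terms with $j = -i \geq 0$, giving $\sum_{i \leq 0} b_i x^{i} = N(x)$. The symmetry between the two statements is exactly the interchange $u \leftrightarrow v$ together with reversing the sign convention on the exponents, matching the two prescribed expansions.

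The only subtlety — and the step I would be most careful about — is ensuring that the product $P(u)\cdot\frac{1}{1-x/u}$ genuinely lives in the iterated Laurent series field $\mathbb{R}((u))((x))((v))$ so that $\CT_u$ is well defined and commutes with the infinite sums; this is where the choice of expansion of $\frac{1}{1-x/u}$ (rather than $\frac{1}{1-x/u} = -\frac{u}{x}\cdot\frac{1}{1-u/x}$) matters, and it is justified by the convention that $u$ is the outermost variable in the iterated construction, as recalled from \cite{Xin-CT}. Once that is fixed, everything reduces to the termwise bookkeeping above.
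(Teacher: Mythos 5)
Your proposal is correct and follows essentially the same route as the paper: expand $\frac{1}{1-x/u}$ as the geometric series $\sum_{j\geq 0}(x/u)^j$, apply $\CT_u$ termwise using linearity so that only the diagonal terms survive, and treat the second identity symmetrically. The paper phrases this as the single monomial computation $\CT_u\,u^k\cdot\sum_{i\geq0}(x/u)^i = x^k$ for $k\geq 0$ plus linearity, which is the same bookkeeping you carry out with the double sum.
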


\begin{proof}
The first equation follows by linearity and the following simple fact:
  \begin{align*}
  \CT\limits_{u}u^{k}\cdot\frac{1}{1-x/u}=\CT\limits_{u}u^{k}\cdot\sum_{i\geq0}(x/u)^{i}=
  \begin{cases}
    x^{k} & \text{if } k\geq0, \\
    0 & \text{if } k<0.
  \end{cases}
  \end{align*}
The second equation follows in a similar way.
\end{proof}

\begin{lem}
$$\omega(n)=\CT_{u,v,x} K_{n}(u,x) K_{n}(v^{-1},x^{-1}) \frac{1}{1-\frac{x}{u}} \frac{1}{1-\frac{v}{x}}$$
\end{lem}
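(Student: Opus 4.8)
The plan is to peel off the auxiliary variables $v$ and then $u$, one at a time, using Lemma~\ref{P-x,u}, until the expression collapses to the one-variable identity for $\omega(n)$. I would first recall the elementary observation from Section~\ref{2} that for a Laurent polynomial $L(x)=\sum_k\ell_k x^k$ one has $\sum_k\ell_k^2=\CT_x L(x)L(x^{-1})$; applied to $G_n(x)=\sum_k b(n,k)x^k$ this gives $\omega(n)=\CT_x G_n(x)G_n(x^{-1})$. Hence it suffices to prove that the claimed three-variable constant term equals $\CT_x G_n(x)G_n(x^{-1})$.

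Next I would record how $K_n(u,x)$ and $K_n(v^{-1},x^{-1})$ sit inside the iterated Laurent series field. Expanding the defining product over subsets $S\subseteq\{0,\dots,n-1\}$ shows $K_n(u,x)=\sum_{j=0}^{n}c_j(x)u^j$, a polynomial in $u$ whose coefficients $c_j(x)=\sum_{|S|=j}\prod_{i\in S}\bigl(x^{2^i}+x^{2^{i+1}}\bigr)$ are free of $u$, with specialization $K_n(x,x)=G_n(x)$ at $u=x$; dually $K_n(v^{-1},x^{-1})=\sum_{j=0}^{n}c_j(x^{-1})v^{-j}$ is a polynomial in $v^{-1}$ with coefficients free of $v$ and specialization $K_n(x^{-1},x^{-1})=G_n(x^{-1})$ at $v=x$. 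Then, working in $\R((u))((x))((v))$ where $\CT_u,\CT_x,\CT_v$ commute, I would apply $\CT_v$ first: among the four factors only $K_n(v^{-1},x^{-1})$ and $\tfrac{1}{1-v/x}=\sum_{i\geq0}(v/x)^i$ involve $v$, so the second half of Lemma~\ref{P-x,u} turns $K_n(v^{-1},x^{-1})\tfrac{1}{1-v/x}$ into $K_n(x^{-1},x^{-1})=G_n(x^{-1})$, leaving $\CT_{u,x}K_n(u,x)\tfrac{1}{1-x/u}\,G_n(x^{-1})$. Next I would apply $\CT_u$: now only $K_n(u,x)$ and $\tfrac{1}{1-x/u}=\sum_{i\geq0}(x/u)^i$ involve $u$, so the first half of Lemma~\ref{P-x,u} turns $K_n(u,x)\tfrac{1}{1-x/u}$ into $K_n(x,x)=G_n(x)$, leaving $\CT_x G_n(x)G_n(x^{-1})=\omega(n)$.

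The one point that needs care --- and the reason the iterated-Laurent-series formalism is invoked at all --- is to make sure each factor is expanded on the correct side: $K_n(u,x)$ as a polynomial in non-negative powers of $u$ and $\tfrac{1}{1-x/u}$ as a series in non-positive powers of $u$, and symmetrically $K_n(v^{-1},x^{-1})$ as a polynomial in $v^{-1}$ and $\tfrac{1}{1-v/x}$ as a series in non-negative powers of $v$, since these are exactly the hypotheses under which Lemma~\ref{P-x,u} applies. These expansions are precisely the ones forced by the ordering $\R((u))((x))((v))$ together with the two geometric series displayed just before the lemma, so I do not expect a genuinely hard step: once the bookkeeping of polynomial-variable versus series-variable is in place, commutativity of the $\CT$ operators and two applications of Lemma~\ref{P-x,u} complete the proof.
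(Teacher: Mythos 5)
Your proof is correct and follows essentially the same route as the paper: separate the $u$- and $v$-dependent factors, apply the two halves of Lemma~\ref{P-x,u} to specialize $K_n(u,x)\mapsto G_n(x)$ and $K_n(v^{-1},x^{-1})\mapsto G_n(x^{-1})$, and finish with $\CT_x G_n(x)G_n(x^{-1})=\omega(n)$. The extra bookkeeping you give about which side each factor is expanded on in $\R((u))((x))((v))$ is exactly what the paper leaves implicit, so nothing further is needed.
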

\begin{proof}
We have
\begin{align*}
\CT_{u,v,x} K_{n}(u,x) K_{n}(v^{-1},x^{-1}) \frac{1}{1-\frac{x}{u}} \frac{1}{1-\frac{v}{x}}
&= \CT_{x} \left( \CT_{u} K_{n}(u,x) \frac{1}{1-\frac{x}{u}} \right) \left( \CT_{v} K_{n}(v^{-1},x^{-1}) \frac{1}{1-\frac{v}{x}} \right) \\
&= \CT_{x} K_{n}(x,x) K_{n}(x^{-1},x^{-1}) \\
&= \CT_{x} G_{n}(x) G_{n}(x^{-1}) = \omega(n).
\end{align*}
\end{proof}

We introduce a state vector $\Phi_{n} := \left( \varphi_{n}^{(-2)}, \varphi_{n}^{(-1)}, \dots, \varphi_{n}^{(2)} \right)^{\mathsf{t}}$, where each entry is defined as
\[
\varphi_{n}^{(i)} := x^{i \cdot 2^{n}} K_{n}(u,x) K_{n}(v^{-1},x^{-1}) \frac{1}{1-\frac{x}{u}} \frac{1}{1-\frac{v}{x}}.
\]
These states satisfy a recursive relationship expressed in the following proposition.

\begin{prop}\label{Recurrence-Matrix-uv}
Following the notation above, for all $m, n \in \mathbb{Z}_{+}$ with $m < 2^{n} + 2 - n$, we have
\[
\CT_{u,v,x} \Phi_{m+n} = \CT_{u,v,x} A^{m} \Phi_{n},
\]
where the transformation matrix $A$ is given by
\[
A = \begin{pmatrix}
u & 0 & 0 & 0 & 0 \\
1 + 2\frac{u}{v} & u + \frac{u}{v} & u & 0 & 0 \\
v^{-1} & v^{-1} + \frac{u}{v} & 1 + 2\frac{u}{v} & u + \frac{u}{v} & u \\
0 & 0 & v^{-1} & v^{-1} + \frac{u}{v} & 1 + 2\frac{u}{v} \\
0 & 0 & 0 & 0 & v^{-1}
\end{pmatrix}.
\]
\end{prop}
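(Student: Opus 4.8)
The plan is to establish the one-step recurrence ($m=1$) and then iterate. Writing out $K_{n+1}(u,x) = K_n(u,x)(1 + ux^{2^n} + ux^{2^{n+1}})$ and similarly $K_{n+1}(v^{-1},x^{-1}) = K_n(v^{-1},x^{-1})(1 + v^{-1}x^{-2^n} + v^{-1}x^{-2^{n+1}})$, I would multiply these two trinomials together to get a Laurent polynomial in $x^{2^n}$ with nine terms, whose exponents of $x^{2^n}$ range over $\{-3,-2,-1,0,1,2,3\}$ with coefficients that are monomials in $u,v^{-1},u/v$. Multiplying through by the extra factor $x^{i\cdot 2^{n+1}} = x^{2i\cdot 2^n}$ coming from the definition of $\varphi_{n+1}^{(i)}$ (note the shift $2^{n+1}=2\cdot 2^n$), each $\varphi_{n+1}^{(i)}$ becomes a linear combination of terms $x^{j\cdot 2^n} K_n(u,x)K_n(v^{-1},x^{-1})\frac{1}{1-x/u}\frac{1}{1-v/x}$ for various $j$. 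The ones with $j\in\{-2,\dots,2\}$ are exactly the entries of $\Phi_n$; the content of the claim is that after applying $\CT_{u,v,x}$, the terms with $|j|\geq 3$ drop out and what remains matches $A\Phi_n$.

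For the vanishing of the out-of-range terms I would use property P2 together with a degree estimate, exactly as in the treatment of $\nu(n)$ in Section \ref{2}. The key fact is $\deg_x G_n(x) = 2^{n+1}-2+n$, hence $\deg_x\big(K_n(u,x)K_n(v^{-1},x^{-1})\big)$ in the $u=v=x$ specialization is controlled; more precisely, after the constant-term extractions in $u$ and $v$ one is left with $\CT_x$ applied to something supported in $x$-degrees of absolute value at most $2^{n+1}-2+n$ (for the relevant specialization), so a term carrying an extra factor $x^{\pm 3\cdot 2^n}$ has least degree $\geq 3\cdot 2^n - (2^{n+1}-2+n) = 2^n + 2 - n > 0$ (or dually degree $<0$), and therefore contributes $0$ to $\CT_x$. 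This is precisely where the hypothesis $m < 2^n+2-n$ enters: it guarantees that throughout the $m$-fold iteration the exponent gap never closes, so each intermediate application of the matrix is legitimate. I would make this rigorous by induction on $m$, checking at step $k$ (with $1\le k\le m$) that the degree slack is at least $2^{n}+2-n-k>0$.

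The remaining task is bookkeeping: verify that the coefficients in the nine-term product, once collected according to which $\varphi_n^{(j)}$ ($j=-2,\dots,2$) they multiply, assemble into precisely the matrix $A$ displayed. Concretely, the trinomial product $(1+ux^{2^n}+ux^{2^{n+1}})(1+v^{-1}x^{-2^n}+v^{-1}x^{-2^{n+1}})$ has: constant term $1 + 2\frac{u}{v}$ (from $1\cdot 1$, $ux^{2^n}\cdot v^{-1}x^{-2^n}$, $ux^{2^{n+1}}\cdot v^{-1}x^{-2^{n+1}}$), $x^{2^n}$-coefficient $u+\frac{u}{v}$, $x^{2\cdot 2^n}$-coefficient $u$, $x^{-2^n}$-coefficient $v^{-1}+\frac{u}{v}$, $x^{-2\cdot 2^n}$-coefficient $v^{-1}$, and the extreme $x^{\pm 3\cdot 2^n}$-coefficients $u$ and $v^{-1}$ which are killed by P2. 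Matching these against $\varphi_{n+1}^{(i)} = x^{2i\cdot 2^n}\cdot(\text{trinomial product})\cdot(\text{base})$ for $i=-2,\dots,2$ produces exactly the five columns of $A$; the zero pattern (the strictly upper-left and lower-right zeros) reflects that shifting $\varphi_n^{(\pm 2)}$ by the product can only land inside the range from one side.

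I expect the main obstacle to be the second paragraph — pinning down the degree bookkeeping carefully enough that the iteration is valid for \emph{all} $m<2^n+2-n$ rather than just $m=1$, since the constant-term operators in $u$ and $v$ interact with the $x$-degree in a way that must be tracked through the iterated Laurent series formalism of \cite{Xin-CT}. The pure matrix-entry verification in the last paragraph is routine but must be done with care about signs of exponents and about which terms fall outside $\{-2,\dots,2\}$.
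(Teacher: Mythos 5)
Your overall strategy coincides with the paper's: expand the two new trinomial factors, identify which shifted states $x^{j\cdot 2^n}K_n(u,x)K_n(v^{-1},x^{-1})\frac{1}{1-x/u}\frac{1}{1-v/x}$ stay in the window $j\in\{-2,\dots,2\}$, kill the out-of-range terms by a degree argument, and iterate. Your vanishing mechanism (use Lemma \ref{P-x,u} to carry out $\CT_u$ and $\CT_v$, which specializes $u,v\to x$, then apply P2 to the resulting Laurent polynomial times $G_n(x)G_n(x^{-1})$) is sound and is equivalent to the paper's count, which instead eliminates $x$ and bounds the $u$-degree by $m-(2^n+2-n)<0$. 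However, your bookkeeping contains a concrete error: the product $(1+ux^{2^n}+ux^{2^{n+1}})(1+v^{-1}x^{-2^n}+v^{-1}x^{-2^{n+1}})$ is supported on exponents $j\cdot 2^n$ with $j\in\{-2,\dots,2\}$ only; there are no $x^{\pm 3\cdot 2^n}$ terms (your own nine-term tally $1+2\frac{u}{v},\,u+\frac{u}{v},\,u,\,v^{-1}+\frac{u}{v},\,v^{-1}$ already exhausts the product). If you carried phantom $\pm3$ terms through the matching you would not recover $A$: for $i=-1$ the shift $x^{-2\cdot 2^n}$ would move a phantom $+3$ term to the in-range index $j=1$ and force a nonzero $(2,4)$ entry. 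The genuine out-of-range terms come solely from combining the shift $x^{2i\cdot 2^n}$ ($|i|\geq 1$) with the product: for $i=\pm2$ four of the five product terms land at $|j|\geq 3$, for $i=\pm1$ two do, and it is these (with coefficients such as $v^{-1}$, $v^{-1}+\frac{u}{v}$, $1+2\frac{u}{v}$, $u+\frac{u}{v}$) that the degree argument must annihilate; your first paragraph has this picture right, your third contradicts it.

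The more substantive gap is the framing ``establish the one-step recurrence ($m=1$) and then iterate.'' The scalar identities $\CT_{u,v,x}\Phi_{n'+1}=\CT_{u,v,x}A\Phi_{n'}$ cannot simply be composed, because the entries of $A$ involve $u$ and $v^{-1}$ and multiplication by $A$ does not commute with $\CT_{u,v}$. What must be proved --- and what the paper proves directly --- is the strengthened one-step identity with the accumulated power kept inside the constant term, namely $\CT_{u,v,x}A^{m-1}\Phi_{n+1}=\CT_{u,v,x}A^{m}\Phi_{n}$, after which the chain $\CT_{u,v,x}A^m\Phi_n=\CT_{u,v,x}A^{m-1}\Phi_{n+1}=\cdots=\CT_{u,v,x}\Phi_{m+n}$ closes the argument. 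In that formulation the vanishing estimate must absorb the degrees (at most $m-1$ in $u$ and in $v^{-1}$) of the entries of $A^{m-1}$ multiplying the out-of-range terms; after the specialization $u,v\to x$ they add up to $m-1$ to the $x$-degree, so the slack becomes $3\cdot 2^n-(2^{n+1}-2+n)-(m-1)-1=2^n+2-n-m>0$, which is exactly why the hypothesis reads $m<2^n+2-n$ (for $m=1$ alone the condition is vacuous). Your ``degree slack at step $k$'' remark points in this direction and produces the right inequality, but without carrying $A^{k}$ inside the constant term there is no object to which that slack attaches, so the induction as sketched does not close. Repair these two points and your argument becomes essentially the paper's proof.
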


\begin{proof}
First, note that entries of $A$ are linear in $u$ and $v^{-1}$. Consequently, entries of $A^{m-1}$ are polynomials in $u$ (respectively $v^{-1}$) of degree at most $m-1$. For each $i$ from 1 to 5, consider the $i$-th row of $A^{m-1}\Phi_{n+1}$:
\begin{align}
(A^{m-1}\Phi_{n+1})_{i} = (A^{m-1})_{(i,1)}\varphi_{n+1}^{(-2)} + (A^{m-1})_{(i,2)}\varphi_{n+1}^{(-1)} + \cdots + (A^{m-1})_{(i,5)}\varphi_{n+1}^{(2)}.\label{eq:A-i-row}
\end{align}
Focusing on the first term:
\begin{align}
(A^{m-1})_{(i,1)}\varphi_{n+1}^{(-2)}
&= (A^{m-1})_{(i,1)} \left[ x^{-2\cdot2^{n+1}} K_{n+1}(u,x) K_{n+1}(v^{-1},x^{-1}) \frac{1}{1 - \frac{x}{u}} \frac{1}{1 - \frac{v}{x}} \right] \notag \\
&= (A^{m-1})_{(i,1)} K_{n}(u,x) K_{n}(v^{-1},x^{-1}) \frac{1}{1 - \frac{x}{u}} \frac{1}{1 - \frac{v}{x}} \notag \\
&\quad \cdot \left[ \frac{u}{x^{2\cdot2^{n}}} + \frac{1}{x^{3\cdot2^{n}}} \left( u + \frac{u}{v} \right) + \frac{1}{x^{4\cdot2^{n}}} \left( 1 + 2\frac{u}{v} \right) \right. \notag \\
&\quad \left. + \frac{1}{x^{5\cdot2^{n}}} \left( v^{-1} + \frac{u}{v} \right) + \frac{1}{x^{6\cdot2^{n}} v} \right]. \label{eq:A-i,1}
\end{align}
By Lemma \ref{P-x,u}, we have
\begin{align}
&\CT_{u,v,x} (A^{m-1})_{(i,1)} K_{n}(u,x) K_{n}(v^{-1},x^{-1}) \frac{1}{1 - \frac{x}{u}} \frac{1}{1 - \frac{v}{x}} \cdot \frac{1}{x^{3\cdot2^{n}}} \left( u + \frac{u}{v} \right) \notag \\
&= \CT_{u,v} (A^{m-1})_{(i,1)} K_{n}(u,u) K_{n}(v^{-1},u^{-1}) \frac{1}{1 - \frac{v}{u}} \frac{1}{u^{3\cdot2^{n}}} \left( u + \frac{u}{v} \right). \label{eq:A-x}
\end{align}
In the polynomial expression
\[
(A^{m-1})_{(i,1)} K_{n}(u,u) K_{n}(v^{-1},u^{-1}) \frac{1}{1 - \frac{v}{u}} \frac{1}{u^{3\cdot2^{n}}} \left( u + \frac{u}{v} \right),
\]
the maximum power of $u$ is bounded above by $m - (2^{n} + 2 - n)$. This expression vanishes under the constant term operator since $m < 2^{n} + 2 - n$, implying (\ref{eq:A-x}) equals $0$. Similarly, the remaining terms in (\ref{eq:A-i,1}) vanish under $\CT_{u,v,x}$. Thus:
\begin{align}
\CT_{u,v,x} (A^{m-1})_{(i,1)} \varphi_{n+1}^{(-2)}
&= \CT_{u,v,x} (A^{m-1})_{(i,1)} K_{n}(u,x) K_{n}(v^{-1},x^{-1}) \frac{1}{1 - \frac{x}{u}} \frac{1}{1 - \frac{v}{x}} \frac{u}{x^{2\cdot2^{n}}} \notag \\
&= \CT_{u,v,x} (A^{m-1})_{(i,1)} \left( A_{(1,1)} \varphi_{n}^{(-2)} + A_{(1,2)} \varphi_{n}^{(-1)} + \cdots + A_{(1,5)} \varphi_{n}^{(2)} \right) \notag \\
&= \CT_{u,v,x} (A^{m-1})_{(i,1)} (A \Phi_{n})_{1}. \notag
\end{align}
Similarly for the other terms in (\ref{eq:A-i-row}), we obtain:
\begin{align*}
\CT_{u,v,x} (A^{m-1} \Phi_{n+1})_{i} = \CT_{u,v,x} \sum_{j=1}^{5} (A^{m-1})_{(i,j)} (A \Phi_{n})_{j}.
\end{align*}
Hence:
\[
\CT_{u,v,x} A^{m-1} \Phi_{n+1} = \CT_{u,v,x} A^{m} \Phi_{n}.
\]
Iterating this relation yields:
\[
\CT_{u,v,x} A^{m} \Phi_{n} = \CT_{u,v,x} A^{m-1} \Phi_{n+1} = \cdots = \CT_{u,v,x} \Phi_{m+n}. \qedhere
\]
\end{proof}

To compute $\omega(n)$, it is convenient to use the five states $\Psi_{n}:=(\psi_{n}^{(-2)},\psi_{n}^{(-1)},...,\psi_{n}^{(2)})^{t}$,
where $\psi_{n}^{(i)}:=x^{i\cdot2^{n}}G_{n}(x)G_{n}(x^{-1})$. Then we have

\begin{thm}\label{Recurrence-Matrix}
For all $m, n\in \mathbb{Z}_{+}$ with $m<2^{n}+2-n$, we have
$$\CT\limits_{x}\Psi_{m+n}=\CT\limits_{x}B^{m}\Psi_{n},$$
where the transfer matrix $B$ is give by
$$B= A|_{u=x,v=x} = \left( \begin {array}{ccccc} x&0&0&0&0\\ \noalign{\medskip}3&x+1&x&0&0
\\ \noalign{\medskip}{x}^{-1}&{x}^{-1}+1&3&x+1&x\\ \noalign{\medskip}0
&0&{x}^{-1}&{x}^{-1}+1&3\\ \noalign{\medskip}0&0&0&0&{x}^{-1}
\end {array} \right).$$
\end{thm}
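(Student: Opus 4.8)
The plan is to deduce Theorem \ref{Recurrence-Matrix} from Proposition \ref{Recurrence-Matrix-uv} by carrying out the substitution $u = x$, $v = x$ carefully, tracking how the constant term operators interact with this specialization. First I would observe that the state $\varphi_n^{(i)}$ specializes, after applying $\CT_u$ and $\CT_v$ via Lemma \ref{P-x,u}, to $\psi_n^{(i)} = x^{i\cdot 2^n} G_n(x) G_n(x^{-1})$; indeed, the lemma gives $\CT_u K_n(u,x)\frac{1}{1-x/u} = K_n(x,x) = G_n(x)$ and $\CT_v K_n(v^{-1},x^{-1})\frac{1}{1-v/x} = K_n(x^{-1},x^{-1}) = G_n(x^{-1})$, and the extra monomial factor $x^{i\cdot 2^n}$ is unaffected. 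So the point is that $\CT_{u,v}\Phi_n = \Psi_n$ in the appropriate sense, and likewise $\CT_{u,v} A^m \Phi_n$ should collapse to $B^m \Psi_n$ under $\CT_x$.

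The key subtlety — and what I expect to be the main obstacle — is that the constant term operators $\CT_u$ and $\CT_v$ do \emph{not} simply commute past the substitution $u = x$, $v = x$: one cannot first set $u = v = x$ in $A^m \Phi_n$ and then take $\CT_x$ and expect the same answer as taking $\CT_{u,v,x}$ of $A^m\Phi_n$. The correct bridge is that in Proposition \ref{Recurrence-Matrix-uv} the matrix $A$ has entries that are \emph{polynomials} in $u$ and $v^{-1}$ (linear, in fact), so $A^{m-1}$ has entries polynomial in $u$ of bounded degree; the proof of the proposition already exploits exactly this to kill the unwanted terms. What I would do is run the same argument as in Proposition \ref{Recurrence-Matrix-uv}, but now first apply $\CT_u$ and $\CT_v$ to reduce $\Phi_{m+n}$ and $A^m\Phi_n$ to expressions in $x$ alone. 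Because the monomials appearing are $x^{j\cdot 2^n}$ times products of the $K_n$'s and the geometric-series factors, Lemma \ref{P-x,u} applies term-by-term: each application of $\CT_u$ replaces a factor $K_n(u,\cdot)\frac{1}{1-x/u}$-type expression evaluated against a polynomial in $u$ by its value at $u = x$, and similarly for $v$. Since $B = A|_{u=x,v=x}$ by definition, the recursion $\CT_x B^{m-1}\Psi_{n+1} = \CT_x B^m \Psi_n$ then follows exactly as the $\CT_{u,v,x}$ version did, and iterating gives $\CT_x \Psi_{m+n} = \CT_x B^m\Psi_n$.

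Concretely, the steps in order are: (i) verify $B = A|_{u=x,v=x}$ by direct substitution, checking each of the five nonzero patterns of entries — $u \mapsto x$, $v^{-1}\mapsto x^{-1}$, $u + u/v \mapsto x + 1$, $1 + 2u/v \mapsto 3$, $v^{-1} + u/v \mapsto x^{-1}+1$; (ii) show that $\CT_{u,v}\varphi_n^{(i)} = \psi_n^{(i)}$ using Lemma \ref{P-x,u} and the factorization $G_n(x) = K_n(x,x)$; (iii) argue that applying $\CT_{u,v}$ to the identity $\CT_x$-version is legitimate because all the relevant series live in the iterated Laurent series field $\mathbb{R}((u))((x))((v))$ where the $\CT$ operators commute, and because the degree bound $m < 2^n + 2 - n$ — which is what makes the unwanted high-power-of-$u$ terms vanish in the proof of Proposition \ref{Recurrence-Matrix-uv} — is preserved; (iv) conclude. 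The one place to be careful is step (iii): I must make sure that the specialization $u = v = x$ is realized \emph{as} the composition of constant-term operators $\CT_u \circ \CT_v$ applied to $\Phi_{m+n}$ and to $A^m\Phi_n$ separately, and that the bounded-degree-in-$u$ property of $A^{m-1}$ (already established in the proof of the proposition) is exactly what guarantees Lemma \ref{P-x,u} can be applied to each matrix entry. Granting that, the theorem is essentially an immediate corollary of Proposition \ref{Recurrence-Matrix-uv}.
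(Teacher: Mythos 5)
Your proposal is correct and follows essentially the same route as the paper: the theorem is deduced from Proposition \ref{Recurrence-Matrix-uv} by applying $\CT_{u,v}$ first (using commutativity of the constant term operators) and invoking Lemma \ref{P-x,u} to see that $\CT_{u,v}\Phi_{m+n}=\Psi_{m+n}$ and $\CT_{u,v}A^{m}\Phi_{n}=B^{m}\Psi_{n}$ with $B=A|_{u=x,v=x}$. If anything, you are more explicit than the paper's proof (which simply calls these identities ``clear'') in justifying why the polynomial-in-$u$ and $v^{-1}$ structure of the entries of $A^{m}$ lets the lemma perform the specialization entrywise.
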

\begin{proof}
Proposition \ref{Recurrence-Matrix-uv} tells us that $$\CT\limits_{u,v,x}\Phi_{m+n}=\CT\limits_{u,v,x}A^{m}\Phi_{n} \Rightarrow \CT\limits_{x}\left(\CT\limits_{u,v}\Phi_{m+n}\right)=\CT\limits_{x}\left(\CT\limits_{u,v}A^{m}\Phi_{n}\right).$$ It is clear that $\CT\limits_{u,v}\Phi_{m+n}=\Psi_{m+n}$ and $\CT\limits_{u,v}A^{m}\Phi_{n}=B^{m}\Psi_{n}$, so we get this corollary.
\end{proof}
By the conditions in Theorem \ref{Recurrence-Matrix}, we had better choose $m=9986, n=14$ so that $m+n=10000$ and $m<2^{14}+2-14$. In other words,
we choose $\Psi(14)$ as the initial term to obtain
$$
\omega(10000)=\CT\limits_{x}(\Psi_{10000})_{3}=\CT\limits_{x}(B^{9986}\Psi_{14})_{3}.
$$

\section{Algebraic Generating Functions}\label{4}
We first express $w(10^4)$ as the constant term of a two-variable rational generating function in $C[x,x^{-1}][[t]]$, the ring of power series in $t$ with coefficients Laurent polynomials in $x$. By taking constant term in $x$ first, we express
$w(10^4)$ as the constant term of an algebraic generating function in $C((t))$.
Finally we deduce P-recursions to finish the computation of $w(10^4)$.

\subsection{Reduction to the constant terms of rational functions}
We employ generating functions to circumvent explicit matrix multiplication. The generating function for the sequence is given by
the third component of the following:
\begin{align*}
\sum_{n\geq 0} (B^{n} \Psi_{14}) t^{n}
&= \sum_{n\geq 0} B^{n} \Psi_{14}  t^{n}  \\
&= (I - Bt)^{-1} \Psi_{14}  \\
&= G_{14}(x) G_{14}(x^{-1}) (I - Bt)^{-1}
\begin{pmatrix}
x^{-2 \cdot 2^{14}} \\
x^{-2^{14}} \\
1 \\
x^{2^{14}} \\
x^{2 \cdot 2^{14}}
\end{pmatrix} .
\end{align*}

Direct computation by Maple gives
\begin{align*}
\left( (I - Bt)^{-1}
\begin{pmatrix}
x^{-2 \cdot 2^{14}} \\
x^{-2^{14}} \\
1 \\
x^{2^{14}} \\
x^{2 \cdot 2^{14}}
\end{pmatrix} \right)_{3}
= & L(x,t) H(x,t),
\end{align*}
where
\[
L(x,t)= \sum_{i=0}^4 L_i(x)t^i =1+ \left( \sum_{k=-32769}^{32769} *\cdot x^{k} \right) t + \cdots + \left( \sum_{k=-32771}^{32771} *\cdot x^{k} \right) t^{4},
\]
and
\[
H(x,t) := -\frac{1}{(\frac{t}{x} - 1)(tx-1)\left((t^{3}-3t^{2}+t)x + (2t^{3}-6t^{2}+5t-1)+(t^{3}-3t^{2}+t)\frac{1}{x}\right)}
\]
is a power series in $t$ with coefficients Laurent polynomials in $x$.

Then we have: 
\[
\omega(10000) = \CT_{x,t} \, t^{-9986} \cdot H(x,t) L(x,t) G_{14}(x) G_{14}(x^{-1}).
\]

Expanding the product $L(x,t) G_{14}(x) G_{14}(x^{-1})$ yields:
\[
L(x,t) G_{14}(x) G_{14}(x^{-1}) = \sum_{k=0}^{4}\sum_{m=-65551}^{65551} c_{k.m}\cdot t^{k} x^{m},
\]
, which gives:
\[
\omega(10000) =  \sum_{k=0}^{4}\sum_{m=-65551}^{65551} c_{k,m} \cdot\CT_{x,t}t^{-9986+k}x^{m}H(x,t).
\]
Thus we are reduced to compute a sequence of subproblems, each of the form
$$\CT_{x,t}t^{k}x^{m}  H(x,t),   \qquad  -9986\leq k \leq -9982  \text{ and } -65551\leq m \leq 65551 .$$
The number of subproblems is approximately 589,967, which exceeds practical computational limits for subsequent calculations.

Using the property
$$H(x,t)=H(x^{-1},t) \Rightarrow  \CT_{x,t}x^{m} t^{k} H(x,t) = \CT_{x,t} x^{-m} t^{k} H(x,t), $$
we can remove positive powers in $x$ except that in $H(x,t)$.
Write
$$ H(x,t)=\sum_{n\geq0}h_{n}(x)t^{n}.$$
Observe that $h_{n}(x)$ satisfying  is a Laurent polynomial in $x$ with exponents ranging from $-n$ to $n$, and satisfies the symmetry $h_n(x) = h_n(x^{-1})$.

The additional property $\CT_x h_n(x) x^{-m} = 0$ for $m > n$ allows us to discard more terms. With the help of Maple, we arrive at
\begin{align}
\omega(10000) = \sum_{i=0}^{4} \sum_{j=-9986+i}^{0} a_{i,j} \CT\limits_{x,t} t^{-9986+i} x^j H(x,t). \label{eq:omega}
\end{align}
The number of subproblems is substantially decreased to at most 40000.

\subsection{The use of the quadratic generating series $X(t)$}
To extract the coefficients of $H(x,t)$, we perform partial fraction decomposition with respect to $x$. Precisely, let
\begin{align}
  X = X(t) &= \frac {1-2{t}^{3}+6
{t}^{2}-5t-\sqrt {12{t}^{4}-40{t}^{3}+33{t}^{2}-10t+1}+5t-1}{2t( 1-3t+{t}^{2}) } \label{eq:X}\\
&=t+2\,{t}^{2}+O \left( {t}^{3} \right) \notag
\end{align}
denote the unique quadratic power series solution to the denominator of $H(x,t)$. Then
\begin{align}
H(x,t) &= -\frac{x^2}{(x - t)(t^3 - 3t^2 + t)(x - X)(x - X^{-1})(tx - 1)} \notag \\
&= \frac{1}{t^2 - 3t + 1} \left( \frac{C_1}{x-t} + \frac{C_2}{1-tx} + \frac{C_3}{x-X} + \frac{C_4}{1-xX} \right), \label{eq:partial_fraction}
\end{align}
where $C_1$ and $C_3$ will be shown irrelevant for our purposes, and
\[
C_2 = \frac{X}{(1-t^{2})(X-t)(Xt-1)}, \quad C_4 = \frac{X^{2}}{t(X-t)(X^{2}-1)(Xt-1)}.
\]
Consequently, for $j \leq 0$, by Lemma \ref{P-x,u}, we have
\begin{align}
\CT_x H(x,t) x^j = U_j + V_j, \label{eq:CT_result}
\end{align}
with
\begin{align*}
U_{j} &= \frac{X^{2-j}}{t(t^{2}-3t+1)(X-t)(X^{2}-1)(Xt-1)}, \\
V_{j} &= -\frac{X t^{-j}}{(t^{2}-3t+1)(X-t)(Xt-1)(t^{2}-1)}.
\end{align*}
It follows that
\[
\CT_{x,t} t^{-9986+i} H(x,t) x^{j} = \CT_{t} \left( t^{-9986+i} U_{j} \right) + \CT_{t} \left( t^{-9986+i} V_{j} \right).
\]
Thus, the problem reduces to computing coefficients in the $t$-series expansions of $U_j$ and $V_j$.

The analysis of $U_j$ is complicated by the presence of $X^{-j}$. Since $U_0$ begins at $t^{-1}$, we write $U_j(t) = \sum_{k \geq -1} u_{j,k} t^k$. Expressing $U_j = X^{-j} N(t,X)/D(t,X)$, we compute
\[
\frac{d}{dt}U_{j}(t) = X^{-j}\frac{N_{1}(t,X)}{D_{1}(t,X)}, \quad \frac{d^{2}}{dt^{2}}U_{j}(t) = X^{-j} \frac{N_{2}(t,X)}{D_{2}(t,X)}.
\]
As $X$ is algebraic of degree $2$ by \eqref{eq:X}, we have the representations:
\begin{align*}
N(t,X)D_{1}(t,X)D_{2}(t,X)/X^{-j} &= P_{0}(t)X + Q_{0}(t), \\
N_{1}(t,X)D(t,X)D_{2}(t,X)/X^{-j} &= P_{1}(t)X + Q_{1}(t), \\
N_{2}(t,X)D(t,X)D_{1}(t,X)/X^{-j} &= P_{2}(t)X + Q_{2}(t).
\end{align*}
We then determine polynomials $f_0(t), f_1(t), f_2(t)$ satisfying
\[
f_0 P_{0} + f_1 P_1 + f_2 P_2 = 0, \quad f_0 Q_{0} + f_1 Q_1 + f_2 Q_2 = 0,
\]
yielding the differential equation
\[
f_0(t)U_{j}(t) + f_1(t)\frac{d}{dt}U_{j}(t) + f_2(t)\frac{d^{2}}{dt^{2}}U_{j}(t) = 0.
\]
This generates a P-recurrence for the coefficients $u_{j,k}$ at fixed $j$:
\begin{align*}
\left(2j^{2} - 2(k+1)^{2} \right)u_{j,k} &+ (-40j^{2} + 48k^{2} + 20k + 8)u_{j,k-1} \\
&+ \cdots + (72k^{2} - 1368k + 6480)u_{j,k-14} = 0.
\end{align*}
The leading coefficients of $X$ provide initial conditions:
\[
u_{j,-j-1} = \frac{1}{2}, \quad u_{j,-j-2} = 0, \quad \ldots, \quad u_{j,-j-14} = 0,
\]
together with the recursion, enabling efficient computation of $u_{j,-9986+i}$.

The $V_j$ case is more straightforward, since $\CT_{t} t^{-9986+i} V_{j} = \CT_{t} t^{-9986+i-j} V_{0}$. We expand
\[
V_0(t) = \sum_{k \geq 0} v_k t^k.
\]
A parallel derivation yields a P-recurrence for $v_k$ (details omitted), which similarly facilitates rapid computation.

Therefore,
\begin{align}
\CT_{x,t} t^{-9986+i} H(x,t) x^{j} &= \CT_{t} t^{-9986+i} (U_{j} + V_{j}) \notag \\
&= u_{j,-9986+i} + v_{-9986+i-j}. \label{eq:ij}
\end{align}

Combining \eqref{eq:omega} and \eqref{eq:ij}, we obtain $\omega(10000)$.

\section{Result}
Using the definition, Maple can compute the first 28 values of $\omega(n)$, starting at $ n=1$, relatively quickly. However, when it comes to larger values, it becomes very slow to complete the run. With the method mentioned in this paper, we can get these values quickly and they are the same as those values we get from the definition. So far, we have verified the correctness of this method in small cases.

Then we use Maple to implement those procedures in Section \ref{4} to get $\omega(10000)$, which costs us about 30 hours. Finally, we get the exact value of $\omega(10000)$, a certain 6591-digit number. See the Appendix. We also provide a Maple sheet of this work at \url{https://pan.baidu.com/s/1PA0_DcwZBgrRBni37MeLoA?pwd=DENU}.

\section{Concluding Remarks}
In this paper, we present a constant term approach to solve the challenging problem posed by Ekhad and Zeilberger regarding the computation of $\omega(10000)$.
Although our method yields this specific result, it does not produce a polynomial-time algorithm for computing $\omega(n)$ for arbitrary $n$.

The developed technique also applies to
similar modified problems, such as replacing $G_n(x)$ with
$$\prod_{i=0}^{n-1} \left(1 + x^{2^i + \epsilon_1} + x^{2^{i+1} + \epsilon_2}\right), $$
where $\epsilon_1, \epsilon_2$ are fixed integers. Proposition \ref{Recurrence-Matrix-uv} should be adjusted accordingly for such cases.

Furthermore, we may apply the constant term method to $u_{\alpha}$ as defined in Equation ~\eqref{eq:u-alpha}. For instance, to compute
$u_3(n) = \sum_{k} a(n,k)^3$, we observe that
$$u_3(n) = \CT_{x,y} F(x)F(y)F(x^{-1}y^{-1}).$$
The computation then proceeds analogously to the approach detailed in Section \ref{2}.

\noindent
{\small \textbf{Acknowledgements:}
The authors would like to thank Doron Zeilberger for his encouragement and valuable suggestions for improving the presentation.

\newpage
\section*{Appendix A: The exact value of $\omega(10000)$}

This is a 6591-digit number as below:

\seqsplit{675076678550698325901796479403600251143360381952601415535642205490109843721680975356704568094370023230114068379355174312057503573495958517311425020825180891240234025630019655597937920600915365471092417343506513602225462887927712738276088914336391550933547158940200788446297923610569534557796061504814214117593555014568157052382104644648070605187952034338123967238293088898229061800859158320141494952173485862431887426848644990177269679714070558927815758546404013701463762554967463419375028239658367580883536119898133224777038036032030692686111099636114179820820216063954391934925866570304805584864224486311695950010023768461803285862021942100020027736460660844967241539830842719469928778882320009600533149848374750041050670759042185564861253452049785498234971594653866307496116009901352119537411484121575772083460684319760159400126747926273038107748976702964967812838278704101126209958057529602807675732411032853230393932763367543423676794466492355094524238227291198564748382823276795465663668655901909925150834550595716786764781161217686876260375274173311813977093964917156064335134070356260593894148221480082348602315910691532559679976729784274166789626753539982288057368912926626449073546253157812368679914757117645630448437547448648349088766172141870848008921672915873317910861931669980736244456505950185488582609297995825439672308412561502256866407227461175874081136259659229294203040201952271966575549767718738399614442056325123203813922978610891422812910112356068717301473575503321920225724318798982069280119866204224148442621835233371466098246905545834271098359128633533487655514141248696565997476297075564605854876692099519648970676192612558672577909492371841499638873833334653309301343836927509385344463403207281387566604192368756093546762330224549068265512081789176884403321199829196833881988177033210566657928358770212118946012983199693263344477879079933171343128901499745402110803569837542365995085365516738184065509037740835725279484056733554483990336819090904543818245615709871759337466399802852067189988470637678018221756222211273526084703577299213957190912758468333320637898254539424577998414930114725215296342305265706764253148922848448494399486749042746624917615910625154524619753111211892747795567323710763495644280964102234890952310717190737072528456534466109368776429711607043473386913842244440310567552256122953326979865123089406071382259805961237434576498869645742184388786526935007619675925328207425173655698123025348616360223623659514680035483831453725981343930727681707070376029923464826365949058799183507104680546297757637540339590663642428010658157328360182530262899565976102754429053704406365023941972988273576870001165704358750402776786727136317366991701218846783281033233754477740790045081771210454495146329570046870240946480492785156366314199054966390708153172704247170238642883412179677571613009798131374206436624295635363141492826856773475324209658545970593442663203301437384235068792776995012378316336811493771573325238290034067520267420778627103379064838391679373124920078515644013444741713216338739605829363014954045286137914890643623952748609410043051635107645273144947081490328667018845135691352135627586383337406272670992471731962062968761750543764770258601110025024849349913096967808945311703796900943686398094837328551838801888211935741410035125488958165889109654853114984993308424179445008603086390071402101332470541582883615696821604502609217484900197693822373013454056308068867062953962401222774087850544755566507659795246984748276737390211612054304173125725229426322212146104271216996470286220334369157384908963885607801736482307313762652351240659233851264617145539110477598791472462714755026538041683179692801610945273950179185023018684314726135147487079641022813650636886220238956937544761390853053335491682306118890466481007890571307715473818844766888007691312517199031309081092400286626982906471024252750248431569148310862307081275199109821057683842183447249863143752639042983380156128092459962300304087537292029757137022199431244880659790067312580996266780527691748962928583693041590949725312977941063431073552784390251876161711125983092126443988293035567356525009657027429130167225510518493836482724597255004603952681538367375873668006273260302301037316131502267758455883790734958762604169006014824686894975973946032261317135044794309395387277562172031450783751314766996163222983547607964514719295851194456320005634542417209716022522785508794621737092318655045778173062204283110164002294813646566136948698838454335263685854934845476208041385988682489080098722472905595832197687137256523631463264113865513806043836047766267349266537359866244538506293641763544370124501213490804886618092499714950472211093349311804389049510501420937277375226929456770460380409557782041399178048782292046674673929508683147635536648313847569514170475522853058744968330115395454163515254717610183232531303321453385476703943316324968647183301082669490869258855129521532016994644174722753531013627225616937883229657097797238753113720870114908171497407336127784072116886471018178676325875208967558496472902910028937420153348938376460443716874352048677168225104161783704266685443150548686498421317852568779957727089793134759069309271894878664670728655067109316444402290945155384923832324317473309212595097734463493353792507277034928314768765433264511672133660214099301666662143121144483513319757949907300006530490357344498696911072570522239666372251834380413328963142953232896470693411167036288338585748687588517825136387293186755038048574005770457089127338286952707381557005825065792003758299220046424535148478918190960484076780881619970910446412521143973960219350133888199383735986968621507482926305527900685341581187658697297496425770914984230066046033622807669959409445279991294285648944528283378185724141297925931610099542028290274405224849781486965093251569516306389186580273702893763928908132954163864063738818033693579089028221077192755328763154865319336822288393969364604078377620635522721687888382193015338993764788543396203523007894718422375232631546106765581628500480754696728871383592423925135282746150000048033710440046758131697525201709293528311299947356760796086903176541326470690548067322542218922791629447885201297382748051872267014735771790202189364025545357575543333462762789433189932426311440990819097589922092007640217921803678861646347574348194165292156539517353067775951221045361172974627735199844909612963801398418386940998647247494542612226969688513408651641238357908891036980926053139058861325159810428770825121473339183130208808178861385826452682717215309961690281183200644238063334050625302764042460940707134568411256428692446447425131.}


\begin{thebibliography}{99}

\bibitem{stern} Ekhad, S. B., \& Zeilberger, D. (2021). Automated Generation of Generating Functions Related to Generalized Stern's Diatomic Arrays in the footsteps of Richard Stanley. arXiv preprint arXiv:2103.12855.

\bibitem{RP.Stanley EC1}Stanley, R. P. (2011). Enumerative combinatorics volume 1 second edition. Cambridge studies in advanced mathematics.

\bibitem{RP.Stanley} Stanley, R. P. (2020). Some linear recurrences motivated by Stern’s diatomic array. The American Mathematical Monthly, 127(2), 99-111.

\bibitem{RP.Stanley2} Stanley, R. P. (2024). Theorems and conjectures on some rational generating functions. European Journal of Combinatorics, 119, 103814.

\bibitem{Xin-CT} Xin, G. (2004). A Fast Algorithm for MacMahon’s Partition Analysis. The Electronic Journal of Combinatorics, 11(R58), 1.

\end{thebibliography}
\end{document}